\documentclass[11pt,a4paper]{article}

\usepackage{amsmath,amssymb,amsthm}
\usepackage{mathtools}
\usepackage{geometry}
\usepackage{booktabs}
\usepackage{enumitem}
\usepackage[dvipsnames]{xcolor}
\usepackage{hyperref}
\hypersetup{colorlinks=true,linkcolor=MidnightBlue,citecolor=MidnightBlue,urlcolor=MidnightBlue}

\newtheorem{theorem}{Theorem}
\newtheorem{proposition}[theorem]{Proposition}

\theoremstyle{definition}

\newtheorem{example}[theorem]{Example}
\theoremstyle{remark}
\newtheorem{remark}[theorem]{Remark}

\newcommand{\R}{\mathbb{R}}
\newcommand{\C}{\mathbb{C}}
\newcommand{\eps}{\varepsilon}
\newcommand{\dbar}{\bar{\partial}}
\newcommand{\im}{\operatorname{Im}}
\newcommand{\re}{\operatorname{Re}}

\title{A Degenerate Elliptic System Solvable by Transport:\\
A Cautionary Example}
\author{Daniel Alay\'on-Solarz\thanks{danieldaniel@gmail.com}}
\date{\today}

\begin{document}
\maketitle

\begin{abstract}
We exhibit a one-parameter family of first-order real elliptic systems on the plane whose
ellipticity constant degenerates to zero as $\delta\to 0$,
with condition number $\kappa = O(\delta^{-2})$.
For any fixed elliptic solver operating at finite precision,
the parameter $\delta$ can be chosen small enough to
defeat the solver;
no uniform numerical scheme based on the ellipticity constant alone
can handle the entire family.
Despite this, every member of the family is explicitly solvable---and its
initial value problem well posed for analytic data---by elementary means once
a transport-theoretic invariant is identified.
The cost of the transport solution is independent of~$\delta$.
The example serves as a cautionary tale: the ellipticity constant alone
does not determine the practical difficulty of a first-order PDE.
Before invoking an elliptic solver, one should compute the
transport obstruction~$G$; its vanishing---or smallness---signals
structure that standard elliptic methods miss entirely.
\end{abstract}

\section{The system}\label{sec:system}

Fix a parameter $\delta > 0$ and consider the real first-order system
for $(u,v) \in C^1(\Omega, \R^2)$, where $\Omega = \{(x,y) \in \R^2 : x > -1\}$:
\begin{equation}\label{eq:system}
\begin{cases}
\displaystyle u_x - \frac{y^2 + \delta^2}{(1+x)^2}\, v_y = 0, \\[10pt]
\displaystyle v_x + u_y + \frac{2y}{1+x}\, v_y = 0.
\end{cases}
\end{equation}

This is a homogeneous, linear, first-order system with smooth coefficients on~$\Omega$.
One may verify directly that its principal symbol has nonzero determinant
for every nonzero covector, so the system is elliptic on all of~$\Omega$.

We denote the coefficients
\begin{equation}\label{eq:alphabeta}
\alpha(x,y) = \frac{y^2 + \delta^2}{(1+x)^2}, \qquad
\beta(x,y) = \frac{-2y}{1+x},
\end{equation}
so that \eqref{eq:system} reads
\begin{equation}\label{eq:system-short}
\begin{cases}
u_x - \alpha\, v_y = 0, \\
v_x + u_y - \beta\, v_y = 0.
\end{cases}
\end{equation}

\begin{remark}[No domain restriction on~$y$]
Unlike many explicit examples of variable-coefficient elliptic systems,
the elliptic domain here is the entire half-plane $\{x > -1\}$, with no
restriction on~$y$. This removes any concern about boundary effects.
\end{remark}

\section{Degeneracy}\label{sec:degen}

\subsection{Principal symbol and ellipticity constant}

Write \eqref{eq:system-short} in the standard form
$P(x,y,\partial)\mathbf{u} = 0$
where
$\mathbf{u} = (u, v)^T$ and $P$ has the principal symbol matrix
\[
P(\xi,\eta) =
\begin{pmatrix}
\xi & -\alpha\,\eta \\
\eta & \xi - \beta\,\eta
\end{pmatrix}.
\]
Its determinant is
$\det P(\xi,\eta) = \xi^2 - \beta\,\xi\eta + \alpha\,\eta^2$.
Completing the square,
\[
\det P = \bigl(\xi - \tfrac{\beta}{2}\eta\bigr)^2
+ \tfrac{1}{4}\bigl(4\alpha - \beta^2\bigr)\eta^2.
\]
The ellipticity discriminant is
\begin{equation}\label{eq:discrim}
\Delta(x,y) := 4\alpha - \beta^2 = \frac{4\delta^2}{(1+x)^2}.
\end{equation}
Since $\delta > 0$ and $1+x > 0$ on $\Omega$, we have $\Delta > 0$ everywhere, confirming
ellipticity.

\subsection{The Beltrami coefficient}

The standard reduction of a first-order elliptic system to Beltrami form
introduces the spectral parameter
\[
\lambda = \frac{-\beta + i\sqrt{\Delta}}{2} = \frac{y + i\delta}{1+x},
\]
and the associated Beltrami coefficient
\[
\mu := \frac{\lambda - i}{\lambda + i}
= \frac{y + i(\delta - 1 - x)}{y + i(\delta + 1 + x)}.
\]
Its squared modulus is
\begin{equation}\label{eq:mu}
|\mu|^2 = \frac{y^2 + (\delta - 1 - x)^2}{y^2 + (\delta + 1 + x)^2}.
\end{equation}
At the origin,
$|\mu(0,0)| = |1 - \delta|/(1 + \delta)$,
and on any compact subset $K \subset \Omega$,
\begin{equation}\label{eq:mu-uniform}
\inf_K |\mu_\delta| \;\longrightarrow\; 1
\qquad \text{as } \delta \to 0^+.
\end{equation}

Table~\ref{tab:mu} records the range of $|\mu|$ on the set
$K = [-\tfrac{1}{2},\, 1] \times [-1,\, 1]$, together with
the elliptic condition number
$\kappa := \bigl(\tfrac{1+\|\mu\|_\infty}{1-\|\mu\|_\infty}\bigr)^2$,
which governs the conditioning of the equivalent second-order problem.

\begin{table}[h]
\centering
\begin{tabular}{@{}ccccc@{}}
\toprule
$\delta$ & $\displaystyle\inf_K |\mu|$ & $\displaystyle\sup_K |\mu|$
& $\kappa$ & Char.\ cost \\
\midrule
$1$      & $0$    & $0.620$ & $18$                & $O(1)$ \\
$10^{-1}$   & $0.667$ & $0.924$ & $6.3\times 10^{2}$  & $O(1)$ \\
$10^{-2}$  & $0.961$ & $0.992$ & $6.3\times 10^{4}$  & $O(1)$ \\
$10^{-3}$ & $0.996$ & $0.999$ & $6.3\times 10^{6}$  & $O(1)$ \\
$10^{-4}$ & $0.9996$ & $0.9999$ & $6.3\times 10^{8}$ & $O(1)$ \\
\bottomrule
\end{tabular}
\caption{Degeneration of the $\delta$-family on $K = [-\tfrac{1}{2}, 1]\times[-1,1]$.
The elliptic condition number $\kappa$ grows as $O(\delta^{-2})$,
while the characteristic method (Section~\ref{sec:char}) has
cost independent of~$\delta$.}\label{tab:mu}
\end{table}

\subsection{Failure of standard elliptic methods}\label{sec:failure}

The degeneration \eqref{eq:mu-uniform} is not merely aesthetic.
Standard computational methods for elliptic systems of Beltrami type
encounter quantitative failure as $\delta \to 0$.

\smallskip
\noindent\textbf{Beurling--Ahlfors iteration.}
The standard approach to the Beltrami equation $w_{\bar z} = \mu\, w_z$ is
to invert $I - \mu S$, where $S$ is the Beurling--Ahlfors singular integral
operator with $\|S\|_{L^p \to L^p} = p^* - 1$ for the optimal~$p$.
The Neumann series converges when $\|\mu\|_\infty \|S\|_{L^p} < 1$.
As $\delta \to 0$, the contraction factor tends to~$1$ and the series diverges.

\smallskip
\noindent\textbf{Direct solvers.}
Newton-type methods for the nonlinear Beltrami problem require inverting
$I - \mu S$ at each step, with condition number scaling as
$\kappa \sim 1/(1 - \|\mu\|_\infty) \to \infty$.

\smallskip
\noindent\textbf{Finite elements.}
The associated divergence-form operator has coefficient matrix
with eigenvalue ratio
$(1 - |\mu|)/(1 + |\mu|) \to 0$, so the resulting linear systems
become increasingly ill-conditioned; cf.\ Table~\ref{tab:mu}.

\medskip

This degeneracy is \emph{uniform on compact sets}:
it is not a boundary layer phenomenon
or a localized singularity, but a global deterioration of
the ellipticity constant.

\begin{remark}[Arbitrary degeneracy]\label{rem:arbitrary}
The family defeats any fixed elliptic method by parameter tuning alone:
for any solver operating at fixed arithmetic precision~$\eps$,
there exists $\delta_0(\eps) > 0$ below which the condition number
$\kappa \sim \delta^{-2}$ exceeds $\eps^{-1}$ and the method fails,
while the characteristic solution (Section~\ref{sec:char}) has cost
independent of~$\delta$.
\end{remark}

\begin{remark}[Relation to the degenerate Beltrami literature]
There is a substantial body of work on the Beltrami equation in the
degenerate regime $\|\mu\|_\infty = 1$, concerned primarily with
finding optimal integrability conditions on the dilatation
$K_\mu = (1+|\mu|)/(1-|\mu|)$ that guarantee existence, uniqueness,
and regularity of homeomorphic solutions;
see~\cite{GMSV2005} for directional dilatation criteria
and~\cite{GRSY2012,AIM2009} for comprehensive treatments.
The present example lives squarely within this degenerate regime
as $\delta \to 0$.
However, the questions addressed are complementary:
the existing theory asks
\emph{under what conditions do solutions exist despite the degeneracy},
while our example asks
\emph{whether the degeneracy is intrinsic or merely an artifact
of the Beltrami formulation}.
The transport obstruction~$G$ provides an answer invisible to
the dilatation: when $G = 0$, the system is explicitly solvable
regardless of~$K_\mu$, and the apparent degeneracy reflects not
a genuine analytical difficulty but rather the unsuitability of
the Beltrami parametrization for the problem at hand.
\end{remark}

\section{The rigidity approach}\label{sec:rigid}

We now show that system~\eqref{eq:system}
has a hidden transport structure
that makes it explicitly solvable, regardless of
the value of~$\delta$.

\subsection{The transport obstruction}\label{sec:obstruction}

System~\eqref{eq:system-short} is the Cauchy--Riemann system
associated with a variable elliptic structure defined by the polynomial
$X^2 + \beta(x,y)\, X + \alpha(x,y) = 0$,
with $\alpha, \beta$ as in \eqref{eq:alphabeta}.  Its generator
$\mathfrak{i}(x,y)$ satisfies
$\mathfrak{i}^2 + \beta\,\mathfrak{i} + \alpha = 0$
in each fiber, and an algebra-valued function $f = u + v\,\mathfrak{i}$ is holomorphic
($\dbar f = 0$, where $\dbar = \tfrac{1}{2}(\partial_x + \mathfrak{i}\,\partial_y)$)
precisely when $(u,v)$ satisfies \eqref{eq:system-short}---\emph{provided}
a certain \emph{transport obstruction} vanishes.

More precisely, expanding $\dbar f = 0$ using
$\mathfrak{i}^2 = -\beta\,\mathfrak{i} - \alpha$ yields the identity
\begin{equation}\label{eq:expansion}
2\,\dbar f = \bigl(u_x - \alpha\,v_y\bigr)
+ \bigl(v_x + u_y - \beta\,v_y\bigr)\,\mathfrak{i}
\;+\; v\,G,
\end{equation}
where
\begin{equation}\label{eq:obstruction}
G := \mathfrak{i}_x + \mathfrak{i}\,\mathfrak{i}_y
\end{equation}
is the \emph{intrinsic obstruction} of the structure~\cite[\S2.2]{AS2026}.
(The derivatives $\mathfrak{i}_x$, $\mathfrak{i}_y$ are determined by implicit
differentiation of $\mathfrak{i}^2 + \beta\,\mathfrak{i} + \alpha = 0$.)

When $G \neq 0$, the equation $\dbar f = 0$ forces additional
lower-order coupling through the $vG$ term,
and system~\eqref{eq:system-short} is \emph{not} the correct
Cauchy--Riemann system.
When $G = 0$, the $vG$ term drops out, equation~\eqref{eq:expansion}
shows that $\dbar f = 0$ is \emph{exactly}
equivalent to~\eqref{eq:system-short},
and the operator $\dbar$ satisfies the Leibniz rule.
This is the \emph{rigid} regime.

\subsection{Rigidity of the \texorpdfstring{$\delta$}{delta}-family}

\begin{proposition}\label{prop:rigid}
For every $\delta > 0$, the transport obstruction~\eqref{eq:obstruction} vanishes
identically: $G \equiv 0$ on~$\Omega$.
\end{proposition}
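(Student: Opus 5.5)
The plan is a direct computation in the fiber algebra, reducing $G$ to the ordinary derivatives of $\alpha$ and $\beta$. First, differentiate the defining relation $\mathfrak{i}^2+\beta\,\mathfrak{i}+\alpha=0$ implicitly in $x$ and in $y$:
\[
(2\mathfrak{i}+\beta)\,\mathfrak{i}_x = -(\beta_x\,\mathfrak{i}+\alpha_x),
\qquad
(2\mathfrak{i}+\beta)\,\mathfrak{i}_y = -(\beta_y\,\mathfrak{i}+\alpha_y).
\]
To solve for $\mathfrak{i}_x,\mathfrak{i}_y$ I need $2\mathfrak{i}+\beta$ to be invertible in the fiber. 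Using the defining relation, $(2\mathfrak{i}+\beta)^2=\beta^2-4\alpha=-\Delta$, and by \eqref{eq:discrim} we have $\Delta=4\delta^2/(1+x)^2>0$ on $\Omega$. Hence $(2\mathfrak{i}+\beta)^{-1}=-(2\mathfrak{i}+\beta)/\Delta$ exists everywhere.

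Next, multiply $G=\mathfrak{i}_x+\mathfrak{i}\,\mathfrak{i}_y$ by the unit $2\mathfrak{i}+\beta$; this is legitimate because the fiber algebra is commutative. This gives
\[
-(2\mathfrak{i}+\beta)\,G=\beta_x\mathfrak{i}+\alpha_x+\mathfrak{i}(\beta_y\mathfrak{i}+\alpha_y).
\]
Reducing $\mathfrak{i}^2=-\beta\mathfrak{i}-\alpha$ then yields
\[
-(2\mathfrak{i}+\beta)\,G=(\beta_x-\beta\beta_y+\alpha_y)\,\mathfrak{i}+(\alpha_x-\alpha\beta_y).
\]
Since $\{1,\mathfrak{i}\}$ is a basis of the fiber, $G\equiv0$ is equivalent to the two scalar identities
\[
\beta_x-\beta\beta_y+\alpha_y=0,\qquad \alpha_x-\alpha\beta_y=0 .
\]

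Finally, verify these two identities from \eqref{eq:alphabeta}. The derivatives are
\[
\beta_x=\frac{2y}{(1+x)^2},\qquad \beta_y=-\frac{2}{1+x},\qquad \alpha_y=\frac{2y}{(1+x)^2},\qquad \alpha_x=-\frac{2\alpha}{1+x}.
\]
The first identity becomes $(2y-4y+2y)/(1+x)^2=0$. The second becomes $-2\alpha/(1+x)+2\alpha/(1+x)=0$.

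Both cancellations are independent of $\delta$: $\delta$ enters only through $\alpha$, and it appears only as an overall factor in the second identity. This gives $G\equiv0$ for every $\delta>0$. The only step requiring care is the invertibility of $2\mathfrak{i}+\beta$, together with the use of commutativity and the basis $\{1,\mathfrak{i}\}$ to pass from an algebra identity to scalar identities. That step is exactly where ellipticity ($\Delta>0$) is used; the remaining algebra is routine.
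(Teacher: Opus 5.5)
Your proof is correct, but it takes a different route from the paper's. The paper never handles $\mathfrak{i}_x,\mathfrak{i}_y$ directly. It quotes the equivalence (Theorem~2.3 of AS2026) between $G=0$ and the Burgers equation $\lambda_x+\lambda\lambda_y=0$ for the spectral parameter. It then checks Burgers in one line from the closed form $\lambda=(y+i\delta)/(1+x)$.

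You instead work from the definition \eqref{eq:obstruction} and the implicit-differentiation convention. You use $(2\mathfrak{i}+\beta)^2=-\Delta$ to invert $2\mathfrak{i}+\beta$, and reduce $G=0$ to the two real identities $\alpha_x=\alpha\beta_y$ and $\beta_x+\alpha_y=\beta\beta_y$. In effect you rederive the coordinate formulas of the paper's subsequent remark. Set $P=\beta_x+\alpha_y-\beta\beta_y$ and $Q=\alpha_x-\alpha\beta_y$, and solve your identity for $G$ using $(2\mathfrak{i}+\beta)^{-1}=-(2\mathfrak{i}+\beta)/\Delta$. This gives exactly $A=(\beta Q-2\alpha P)/\Delta$ and $B=(2Q-\beta P)/\Delta$.

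What your route buys:
\begin{itemize}
\item it is self-contained, with no appeal to the external theorem;
\item it shows explicitly where ellipticity enters;
\item the test needs only $\alpha,\beta$ and no square root.
\end{itemize}

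What the paper's route buys is the Burgers equation itself. That is what the proof of the spectral transport proposition actually uses (``the last term vanishes by Proposition~\ref{prop:rigid}''). You can recover it at no cost. Rerun your computation with $\mathfrak{i}$ replaced by the complex root $\lambda$ of $X^2+\beta X+\alpha$. This gives $(2\lambda+\beta)(\lambda_x+\lambda\lambda_y)=-(P\lambda+Q)$ with $2\lambda+\beta=i\sqrt{\Delta}\neq 0$. Since $\lambda\notin\R$ and $P,Q$ are real, Burgers is equivalent to $P=Q=0$. So the two proofs are the same computation, carried out once in the abstract fiber and once in its complex realization.
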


\begin{proof}
The vanishing $G = 0$ is equivalent~\cite[Theorem~2.3]{AS2026} to the
conservative Burgers equation $\lambda_x + \lambda\,\lambda_y = 0$
for the spectral parameter.
A direct computation:
\[
\lambda_x = -\frac{y + i\delta}{(1+x)^2}, \qquad
\lambda_y = \frac{1}{1+x}, \qquad
\lambda_x + \lambda\,\lambda_y
= -\frac{y + i\delta}{(1+x)^2} + \frac{y + i\delta}{(1+x)^2}
= 0. \qedhere
\]
\end{proof}

\begin{remark}[Checking rigidity from $\alpha$, $\beta$ alone]
One does not need the spectral parameter to check rigidity.
The obstruction $G$, expressed in the basis $\{1, \mathfrak{i}\}$
as $G = A + B\,\mathfrak{i}$, has coefficients~\cite[\S1.6]{AS2026}
\[
A = \frac{\beta(\alpha_x - \alpha\beta_y) - 2\alpha(\beta_x + \alpha_y - \beta\beta_y)}{\Delta},
\quad
B = \frac{2(\alpha_x - \alpha\beta_y) - \beta(\beta_x + \alpha_y - \beta\beta_y)}{\Delta}.
\]
Substituting \eqref{eq:alphabeta} and simplifying gives $A = B = 0$.
This check requires only $\alpha$, $\beta$, and their first partial derivatives.
\end{remark}

\subsection{From the real system to the spectral transport equation}\label{sec:spectral}

We now derive the spectral transport equation that makes the system
explicitly solvable.  The derivation is self-contained from
system~\eqref{eq:system-short}.

Write the spectral parameter in real and imaginary parts:
\begin{equation}\label{eq:lambda-ab}
\lambda = a + ib, \qquad a(x,y) = \frac{y}{1+x}, \quad
b(x,y) = \frac{\delta}{1+x}.
\end{equation}
Then $\alpha = a^2 + b^2$ and $\beta = -2a$.

\begin{proposition}[Spectral transport form]\label{prop:spectral}
Let $(u,v)$ be a $C^1$ solution of the real system~\eqref{eq:system-short}.
Define the complex-valued function
\begin{equation}\label{eq:w-def}
w := u + v\,\lambda = (u + av) + i\,bv.
\end{equation}
Then $w$ satisfies the transport equation
\begin{equation}\label{eq:transport}
w_x + \lambda\, w_y = 0.
\end{equation}
Conversely, if $w = p + iq$ satisfies~\eqref{eq:transport}, then
$u = p - (a/b)\,q$ and $v = q/b$ satisfy~\eqref{eq:system-short}.
\end{proposition}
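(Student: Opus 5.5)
Direct computation with the product rule: $w_x + \lambda w_y = (u_x + \lambda u_y) + \lambda(v_x + \lambda v_y) + v(\lambda_x + \lambda\lambda_y)$. The last bracket vanishes identically by the Burgers equation verified in the proof of Proposition~\ref{prop:rigid}; this is the only point where the specific form of $\lambda$ enters. So it remains to show
\[
u_x + \lambda u_y + \lambda v_x + \lambda^2 v_y = 0
\]
for solutions of \eqref{eq:system-short}.

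Since $\lambda$ is a root of $X^2 - \beta X + \alpha = 0$ (indeed $\lambda + \bar\lambda = 2a = -\beta$ and $\lambda\bar\lambda = \alpha$ give $\lambda^2 + \beta\lambda + \alpha = 0$), we substitute $\lambda^2 = -\beta\lambda - \alpha$. The expression becomes
\[
(u_x - \alpha v_y) + \lambda\,(u_y + v_x - \beta v_y),
\]
where one sign still has to be matched against the convention $\beta = -2a$ of \eqref{eq:lambda-ab} versus the second equation of \eqref{eq:system-short}. Both brackets are exactly the two equations, so the expression vanishes.

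For the converse, note that $b = \delta/(1+x) > 0$, so the map $(u,v)\mapsto(p,q) = (u+av,\, bv)$ is pointwise invertible with the stated inverse. Reading the identity above backwards, if $w$ solves \eqref{eq:transport}, then
\[
(u_x - \alpha v_y) + \lambda\,(v_x + u_y - \beta v_y) = 0.
\]
Both brackets are real, and $\im\lambda = b \neq 0$, so taking imaginary parts kills the second bracket and then the real part kills the first. This recovers \eqref{eq:system-short}.

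The main obstacle is purely bookkeeping: the sign convention for $\beta$. One must check that $\lambda$ satisfies $\lambda^2 + \beta\lambda + \alpha = 0$ with $\beta = -2y/(1+x)$, so that the coefficient of $\lambda$ is $v_x + u_y - \beta v_y$ rather than $v_x + u_y + \beta v_y$. With $\lambda = (y+i\delta)/(1+x)$, a direct expansion settles this: $\lambda^2 = (y^2 - \delta^2 + 2iy\delta)/(1+x)^2$ and $\beta\lambda = -2y(y+i\delta)/(1+x)^2$, so $\lambda^2 + \beta\lambda + \alpha = 0$ holds.
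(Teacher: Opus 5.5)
Your proof is correct and follows the paper's route. You use the same product-rule split, with the Burgers term killed by Proposition~\ref{prop:rigid}. Your substitution $\lambda^2=-\beta\lambda-\alpha$ packages the paper's real/imaginary bookkeeping into the identity $w_x+\lambda w_y=(u_x-\alpha v_y)+\lambda(v_x+u_y-\beta v_y)$. That identity holds for every real $C^1$ pair, which makes your converse via $\im\lambda=b\neq 0$ explicit and shows the paper's remark that $b_y=0$ is not actually needed.

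One slip: your opening claim that $\lambda$ is a root of $X^2-\beta X+\alpha$ should read $X^2+\beta X+\alpha$, as your parenthetical and final expansion correctly show.
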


\begin{proof}
Compute, using $w = u + v\lambda$:
\[
w_x + \lambda\,w_y
= \bigl(u_x + \lambda\,u_y\bigr)
+ \lambda\bigl(v_x + \lambda\,v_y\bigr)
+ v\bigl(\lambda_x + \lambda\,\lambda_y\bigr).
\]
The last term vanishes by Proposition~\ref{prop:rigid}.
For the remaining terms, expand and collect real parts
using $\alpha = a^2 + b^2$ and $\beta = -2a$:
\begin{align*}
\re\bigl(u_x {+} \lambda u_y {+} \lambda(v_x {+} \lambda v_y)\bigr)
&= u_x + au_y + av_x + (a^2{-}b^2)v_y.
\end{align*}
From~(I): $u_x = (a^2{+}b^2)v_y$.
From~(II): $v_x = -u_y - 2av_y$. Substituting:
$(a^2{+}b^2)v_y + au_y + a(-u_y - 2av_y) + (a^2{-}b^2)v_y = 0$.
For the imaginary part:
$\im(\cdots) = b(u_y + v_x + 2av_y)$,
which vanishes by~(II).

For the converse, set $u = p - (a/b)q$ and $v = q/b$,
note that $b_y = 0$ for our family, and reverse the computation.
\end{proof}

\begin{remark}[The limit $\delta \to 0$]
The converse recovery $v = q/b$ requires $b = \delta/(1+x) \neq 0$,
which holds for every $\delta > 0$ but fails at $\delta = 0$.
At $\delta = 0$ the structure becomes parabolic ($\Delta = 0$),
the spectral parameter $\lambda = y/(1+x)$ is real,
and the map $(u,v) \mapsto w = u + v\lambda$ ceases to be invertible.
The family is thus genuinely elliptic for each $\delta > 0$,
but the spectral identification degenerates in the limit.
\end{remark}

\begin{remark}
Equation~\eqref{eq:transport} is a linear,
complex-valued transport equation
with coefficient $\lambda(x,y)$---a known, explicit function of
$(x,y)$, determined by the coefficients of the original system,
not by the solution~$w$.
The map $(u,v) \mapsto w = u + v\lambda$ is the \emph{spectral identification}:
it recombines the real unknowns using the spectral parameter as
a mixing coefficient, absorbing the variable fiber structure
into the standard complex plane~$\C$.
\end{remark}

\subsection{Explicit solutions and the characteristic method}\label{sec:char}

Since $\lambda$ satisfies $\lambda_x + \lambda\lambda_y = 0$,
it is constant along the characteristic lines $y - \lambda x = \text{const}$.
The conserved quantity
\begin{equation}\label{eq:zeta}
\zeta(x,y) := y - x\,\lambda(x,y) = \frac{y - i\delta x}{1+x}
\end{equation}
is the \emph{characteristic coordinate}.
Any analytic function of~$\zeta$ solves the transport
equation~\eqref{eq:transport}.

\begin{proposition}[Solution by characteristics]\label{prop:char}
Let $f_0$ be analytic on a neighborhood of $\{x = 0\}$ in $\C$.
Then the unique solution of~\eqref{eq:transport} with
$w(0,y) = f_0(y)$ is
\begin{equation}\label{eq:char-sol}
w(x,y) = f_0\!\bigl(\zeta(x,y)\bigr)
= f_0\!\biggl(\frac{y - i\delta x}{1+x}\biggr),
\end{equation}
valid in a neighborhood of $\Gamma = \{x=0\}$ determined by the
characteristics.
\end{proposition}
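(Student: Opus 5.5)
The proof has two parts: existence, by direct verification, and uniqueness, via a Cauchy--Kovalevskaya type argument. The reason for the latter is that for complex $\lambda$ the transport equation~\eqref{eq:transport} has no real characteristics. Indeed, it is equivalent to the elliptic system~\eqref{eq:system-short} by Proposition~\ref{prop:spectral}, so uniqueness is not obvious in any naive $C^1$ sense. I will therefore interpret ``unique'' within the class of solutions analytic near $\Gamma$, matching the hypothesis on $f_0$.

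\textbf{Existence.} First compute the derivatives of $\zeta = (y-i\delta x)/(1+x)$:
\[
\zeta_x = \frac{-i\delta(1+x) - (y-i\delta x)}{(1+x)^2} = -\frac{y+i\delta}{(1+x)^2}, \qquad \zeta_y = \frac{1}{1+x}.
\]
Hence $\zeta_x + \lambda\zeta_y = 0$, exactly as in the proof of Proposition~\ref{prop:rigid}. Since $f_0$ is holomorphic, the chain rule gives $w_x + \lambda w_y = f_0'(\zeta)(\zeta_x+\lambda\zeta_y)=0$ for $w=f_0\circ\zeta$. This identity holds wherever $\zeta(x,y)$ lies in the domain $U\subset\C$ of analyticity of $f_0$. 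At $x=0$ we have $\zeta(0,y)=y$, so the initial condition holds. Moreover, $\zeta$ is continuous and $\zeta(0,y)=y\in U$, so $\zeta(x,y)\in U$ for $|x|$ small, locally uniformly in $y$. This gives the neighborhood of $\Gamma$ ``determined by the characteristics''; concretely, the condition is $\operatorname{dist}(\zeta(x,y),\C\setminus U)>0$. One could also note that the level sets of $\zeta$ are the complex lines $y-\lambda x=\text{const}$.

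\textbf{Uniqueness.} The plan is to change variables to $(x,\zeta)$, or more simply to apply Cauchy--Kovalevskaya. The line $\{x=0\}$ is noncharacteristic for $\partial_x+\lambda\partial_y$, because the coefficient of $\partial_x$ is $1$. The coefficient $\lambda$ is real-analytic in $(x,y)$, and the datum $f_0|_{\R}$ is real-analytic. Cauchy--Kovalevskaya then yields a unique real-analytic solution near $\Gamma$, which must coincide with~\eqref{eq:char-sol}. An alternative, more hands-on route avoids the general theorem. If $w$ is analytic and solves~\eqref{eq:transport}, then every $\partial_x^k w(0,y)$ is determined recursively from $f_0$ by differentiating $w_x=-\lambda w_y$, so the Taylor series in $x$ is unique. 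Equivalently, Holmgren's theorem would give uniqueness among $C^1$ solutions, since the operator has analytic coefficients and $\Gamma$ is noncharacteristic.

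The main obstacle is this uniqueness step, specifically settling the function class. In my plan I would state uniqueness among $C^1$ solutions via Holmgren. If only analytic solutions are intended, the Taylor-coefficient recursion suffices.
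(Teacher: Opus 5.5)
Your proposal is correct and follows the paper's proof. Existence comes from $\zeta$ being annihilated by $\partial_x+\lambda\partial_y$, hence so is $f_0\circ\zeta$; your chain-rule computation is the rigorous form of the paper's ``$\zeta$ is constant along characteristics,'' which is only heuristic since $\lambda$ is complex. Uniqueness comes from $\Gamma$ being noncharacteristic, which the paper asserts in one line and you make precise via Cauchy--Kovalevskaya for analytic solutions or Holmgren for $C^1$ solutions, also specifying the function class and the region $\{\zeta(x,y)\in U\}$ where the formula holds.
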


\begin{proof}
On $\Gamma$: $\zeta(0,y) = y$, so $w(0,y) = f_0(y)$.
Along characteristics: $\zeta$ is constant, hence $f_0(\zeta)$ is
constant, so $w_x + \lambda w_y = 0$.
Uniqueness follows from the noncharacteristic transversality
of $\Gamma$.
\end{proof}

\begin{example}[Explicit real solution pair]\label{ex:real}
Take the power $w = \lambda^2$.  By Proposition~\ref{prop:spectral},
this corresponds to the real pair
$\mathfrak{i}^2 = -\alpha - \beta\,\mathfrak{i}$, giving
\[
u = -\alpha = -\frac{y^2+\delta^2}{(1+x)^2}, \qquad
v = -\beta = \frac{2y}{1+x}.
\]
Direct verification:
$(u)_x = 2(y^2{+}\delta^2)/(1{+}x)^3 = \alpha\cdot 2/(1{+}x) = \alpha\,(v)_y$;
$(v)_x + (u)_y - \beta\,(v)_y = -2y/(1{+}x)^2 - 2y/(1{+}x)^2 + 4y/(1{+}x)^2 = 0$.
\end{example}

\begin{example}[An initial value problem]\label{ex:ivp}
With $f_0(y) = e^{y + i\delta}$, Proposition~\ref{prop:char} gives
\[
w(x,y)
= \exp\!\biggl(\frac{y + i\delta}{1+x}\biggr) = e^{\lambda},
\]
verified by $w_x + \lambda w_y
= e^\lambda\bigl(-\lambda/(1{+}x) + \lambda/(1{+}x)\bigr) = 0$.
\end{example}

\begin{remark}[Cost comparison]\label{rem:cost}
The characteristic solution~\eqref{eq:char-sol} requires
evaluating $\zeta(x,y)$ (a rational function of $x$, $y$, $\delta$)
and composing with $f_0$.  At each grid point, this is a
bounded number of arithmetic operations---the same for
$\delta = 1$ and $\delta = 10^{-10}$.
By contrast, an elliptic finite-element solver on an $N\times N$ grid
must invert a system whose condition number scales as
$\kappa \sim \delta^{-2}$
(Table~\ref{tab:mu}).
For $\delta = 10^{-4}$, this is $\kappa \sim 10^{8}$.

The characteristic method is not, of course, a universal numerical
scheme: it requires analytic initial data (or data that can be
analytically continued into the complex $\zeta$-plane), and its
domain of validity is bounded by the characteristic geometry.
The point is that when the transport structure is present,
it provides a solution mechanism whose cost is \emph{entirely decoupled}
from the ellipticity constant.

The characteristic domain deserves a brief comment.
The real characteristic curves of the transport equation
$w_x + \lambda w_y = 0$ are the integral curves of
$dy/dx = \re\lambda = y/(1+x)$, namely $y = C(1+x)$ for $C \in \R$.
The complex conserved quantity $\zeta = (y - i\delta x)/(1+x)$
restricts to a constant along each such curve;
it simultaneously provides the analytic continuation of the
initial data into the complex $\zeta$-plane.
Starting from initial data on $\Gamma = \{x=0\}$,
the characteristics remain non-intersecting in a neighborhood whose size
depends on $f_0$ but not on~$\delta$.
\end{remark}

\subsection{Why the computation is \texorpdfstring{$\delta$}{δ}-independent}\label{sec:why}

The resolution of the apparent paradox is as follows.
The Beltrami coefficient $\mu$ measures how far the
complex structure deviates from the standard one.
As $\delta \to 0$, the structure deviates maximally, and
any method based on deforming from the standard structure
(which is what Beurling--Ahlfors iteration does) must
traverse nearly the full diameter of Teichm\"uller space.

But the system has additional structure that $\mu$ does not see:
the transport obstruction $G = \mathfrak{i}_x + \mathfrak{i}\,\mathfrak{i}_y$
vanishes.  This means that the spectral parameter $\lambda$
satisfies the conservative Burgers equation
$\lambda_x + \lambda\,\lambda_y = 0$,
so the coefficients $(\alpha, \beta)$ are governed by
transport along characteristics rather than
by an elliptic fixed-point problem.

The vanishing of $G$ is invisible to the Beltrami coefficient.
Two systems with the same $|\mu|$ profile can have
completely different transport obstructions: one may be rigid
($G = 0$) and explicitly solvable, while the other may
require the full force of elliptic regularity theory.

\section{Conclusion}\label{sec:conclusion}

The $\delta$-family \eqref{eq:system} is an explicit one-parameter family of
smooth, real, first-order elliptic systems on the half-plane with
the following properties:
\begin{enumerate}[label=(\roman*),nosep]
\item the coefficients $\alpha$, $\beta$ are rational functions of
$(x, y, \delta)$;
\item the system is elliptic for every $\delta > 0$, but its
ellipticity constant degenerates to zero as $\delta \to 0$,
with condition number $\kappa = O(\delta^{-2})$;
\item standard elliptic solvers---Beurling--Ahlfors iteration,
Newton--Beltrami methods, finite elements---deteriorate quantitatively as
$\delta \to 0$;
\item yet the system is explicitly solvable via the characteristic
method (Proposition~\ref{prop:char}), at cost independent of~$\delta$.
\end{enumerate}

The mechanism behind (iv) is the vanishing of the transport obstruction
$G = \mathfrak{i}_x + \mathfrak{i}\,\mathfrak{i}_y$
(Proposition~\ref{prop:rigid}).
This is a first-order, pointwise, algebraically computable
quantity that depends only on $\alpha$, $\beta$,
and their first derivatives.  Its vanishing is equivalent to saying
that the spectral parameter satisfies a
conservative transport law, which in turn implies that solutions
propagate along characteristics.

To our knowledge, the degenerate Beltrami literature
(e.g.~\cite{GMSV2005,AIM2009,GRSY2012})
does not contain an example that is simultaneously
arbitrarily ill-conditioned for all standard elliptic solvers
and explicitly solvable by an alternative method at
$\delta$-independent cost.
The examples studied there are designed to probe the
boundary of the existence and uniqueness theory
(integrability conditions on the distortion~$K_\mu$),
not to expose a gap between elliptic conditioning
and practical solvability.

The intermediate regime---$G \not\equiv 0$ but $\|G\|$ small,
where one expects a perturbative transition between
characteristic and elliptic methods---is not addressed here.

\medskip

The practical lesson is this:
\emph{before applying an elliptic solver to a first-order smooth PDE system,
one should compute the transport obstruction.}
If $G$ vanishes---or is small relative to the other scales
in the problem---transport-based methods
(characteristic tracing, similarity factorization,
integrating factors along characteristics, variable elliptic analysis) may succeed where
elliptic methods stall.
The obstruction $G$ is cheap to evaluate: it requires only the
first partial derivatives of the coefficients, combined in a
specific algebraic way.
Including it as part of the numerical triage
is a negligible cost with
potentially decisive benefit.

One might summarize the situation as follows.
The Beltrami coefficient $\mu$ measures the distance from the standard
complex structure.
The transport obstruction $G$ measures whether
the structure moves \emph{coherently}---compatibly with its own
multiplication law.
A structure can be far from standard ($|\mu| \approx 1$)
and yet coherent ($G = 0$).  When it is,
the difficulty is a mirage: the problem lives on a
submanifold of the Beltrami space where characteristic
methods provide what elliptic methods cannot.

\section*{Acknowledgements}

\noindent\textbf{Use of Generative AI Tools.}
Portions of the writing and editing of this manuscript were assisted by generative AI language tools. These tools were used to improve clarity of exposition, organization of material, and language presentation. All mathematical results, statements, proofs, and interpretations were developed, verified, and validated by the author. The author takes full responsibility for the accuracy, originality, and integrity of all content in this work, including any material produced with the assistance of AI tools. No generative AI system is listed as an author of this work.

\end{document}